\documentclass{amsart}
\usepackage{amsmath}
\usepackage{amssymb}
\usepackage{amscd}
\usepackage{color}
\usepackage[all]{xy}

\newcommand{\fm}{\mathfrak{m}}

\newcommand{\Hom}{\operatorname{Hom}}
\newcommand{\Tor}{\operatorname{Tor}}
\newcommand{\im}{\operatorname{im}}

\newcommand{\Ann}{\operatorname{Ann}}

\newcommand{\fd}{\operatorname{fd}}

\newtheorem{theorem}{Theorem}[section]
\newtheorem{lemma}[theorem]{Lemma}
\newtheorem{corollary}[theorem]{Corollary}
\newtheorem{proposition}[theorem]{Proposition}

\newtheorem{example}[theorem]{Example}
\newtheorem{remark}[theorem]{Remark}

\begin{document}

\title[A characterization of monoid graded semihereditary rings]
{A characterization of monoid graded semihereditary rings}
\author[P. Sahandi and N. Shirmohammadi]
{Parviz Sahandi and Nematollah Shirmohammadi}

\address{(Sahandi) Department of Pure Mathematics, Faculty of Mathematics, Statistics and Computer Science,  University of Tabriz, Tabriz, Iran.} \email{sahandi@ipm.ir, sahandi@tabrizu.ac.ir}
\address{(Shirmohammadi) Department of Pure Mathematics, Faculty of Mathematics, Statistics and Computer Science,  University of Tabriz, Tabriz, Iran.} \email{shirmohammadi@tabrizu.ac.ir}


\thanks{2020 Mathematics Subject Classification: 16W50, 16D40, 16E60, 13F05}
\thanks{Key Words and Phrases: Graded left semihereditary ring, graded-Pr\"{u}fer domain, graded flat module, graded weak dimension, graded coherent ring}

\begin{abstract}
Let  $\Gamma$ be a cancellation monoid and $R=\bigoplus_{\alpha \in \Gamma}R_{\alpha}$ be a $\Gamma$-graded ring. It is shown that $R$ is graded left semihereditary if and only if $R$ is graded left coherent and every graded submodule of a flat left $R$-module is flat. Hence it gives a new characterization of graded-Pr\"{u}fer domains.
\end{abstract}

\maketitle

\section{Introduction and preliminaries}

Let $\Gamma$ be a cancelation monoid, and $R=\bigoplus_{\alpha \in \Gamma}R_{\alpha}$ be a $\Gamma$-graded ring with identity. The authors in \cite{ss25} defined and studied the notion of graded left (right) semihereditary rings which is a generalization of graded-Pr\"{u}fer domains. Recall that $R$ is a \emph{graded left (right) semihereditary ring} if each finitely generated homogeneous left (right) ideal of $R$ is a projective $R$-module. They gave some characterizations of these kind of graded rings (see \cite[Theorems 5.4, 5.8 and Proposition 5.5]{ss25}). In particular, it was shown that $R$ is graded left semihereditary if and only if $R$ is graded left coherent and every homogeneous left ideal is flat \cite[Proposition 5.5]{ss25}. Recall that a graded ring $R$ is a \emph{graded left coherent ring} if every finitely generated graded left ideal is finitely presented. This paper aims to obtain a generalization of this characterization. In fact, we show that the $\Gamma$-graded ring $R$ is graded left semihereditary if and only if $R$ is graded left coherent and every graded submodule of a flat left $R$-module is flat. To prove this result, we need to study the graded version of the notion of weak dimension of a ring.

Let $\Gamma$ be a cancelation monoid, that is, $\Gamma$ is a semigroup with a neutral element $\varepsilon$, not necessarily commutative, and
$\Gamma$ satisfies the (left and right) cancelation law: $\alpha\alpha_1=\alpha\alpha_2$ implies $\alpha_1=\alpha_2$, and $\alpha_1\alpha=\alpha_2\alpha$ implies $\alpha_1=\alpha_2$ for all $\alpha,\alpha_1,\alpha_2\in\Gamma$. By a \emph{$\Gamma$-graded ring} $R$, we mean an associative ring with the multiplicative identity 1, which has a direct sum decomposition $R=\bigoplus_{\alpha \in \Gamma}R_{\alpha}$, where each $R_{\alpha}$ is an additive subgroup of $R$ such that $R_{\alpha}R_{\beta}\subseteq R_{\alpha\beta}$ for all $\alpha,\beta\in\Gamma$. For each $\alpha\in\Gamma$, we call $0\neq a\in R_{\alpha}$ a homogeneous element of degree $\alpha$ and write $\deg(a)=\alpha$. For the basics about graded rings and
graded modules we used in this paper, one may refer to \cite{no04}, though it mainly deals with group $G$-graded rings.

Let $R=\bigoplus_{\alpha \in \Gamma}R_{\alpha}$ be a $\Gamma$-graded ring as above. By the definition, it is clear that the degree-$\varepsilon$ part $R_{\varepsilon}$ is a subring of $R$, and by using the cancelation law on $\Gamma$, it is easy to check that $1\in R_{\varepsilon}$. Let $I$ be a left (resp. right) ideal of $R$. If $I$ is generated by homogeneous elements, then we say that $I$ is a \emph{homogeneous left (resp. right) ideal} of $R$. A homogeneous left ideal of $R$ is called a {\em maximal homogeneous left ideal} if it is maximal among proper homogeneous left ideals of $R$. It is easy to see that each proper homogeneous left ideal of $R$ is contained in a maximal homogeneous left ideal of $R$.

Let $R=\bigoplus_{\alpha \in \Gamma}R_{\alpha}$ be a $\Gamma$-graded ring. By a $\Gamma$-graded $R$-module $M$, we mean a left $R$-module which has a direct sum
decomposition $M=\bigoplus_{\alpha \in \Gamma}M_{\alpha}$, where each $M_{\alpha}$ is an additive subgroup of $M$, such that $R_{\alpha}M_{\beta}\subseteq M_{\alpha\beta}$ for all $\alpha,\beta\in\Gamma$. Similarly, one can define a $\Gamma$-graded right $R$-module. For each $\alpha\in\Gamma$, we call $0\neq x\in M_{\alpha}$ a \emph{homogeneous element} of degree $\alpha$ and write $\deg(x)=\alpha$.
Let $N$ be a submodule of $M$. If $N$ is generated by some homogeneous elements, then we say that \emph{$N$ is a graded submodule of $M$}.

Let $M=\bigoplus_{\alpha \in \Gamma}M_{\alpha}$ and $N=\bigoplus_{\alpha \in \Gamma}N_{\alpha}$ be $\Gamma$-graded left $R$-modules and $\alpha\in\Gamma$. A \emph{homogeneous $R$-module homomorphism of degree $\alpha$} is an $R$-module homomorphism $f:M\to N$, such that $f(M_{\beta})\subseteq N_{\beta\alpha}$ for any $\beta\in\Gamma$. We say that $f:M\to N$ is a \emph{homogeneous $R$-module homomorphism} if $\alpha=\varepsilon$. In this case, it is easy to see that $\ker(f)$ and $\im(f)$ are graded submodules of $M$ and $N$, respectively. Let $\Hom_R(M,N)$ be the group of all $R$-module homomorphism from $M$ to $N$, and denote by $\Hom_{R,\alpha}(M,N)$ the subgroup of $\Hom_R(M,N)$ consisting of all homogeneous $R$-module homomorphisms of degree $\alpha$. As in \cite{no82} and \cite{ss25}, we put $$\sideset{^*}{_{R}}{\Hom}(M,N):=\bigoplus_{\alpha\in\Gamma}\Hom_{R,\alpha}(M,N).$$ One notices that the functors $\sideset{^*}{_{R}}{\Hom}(M,-)$ and $\sideset{^*}{_{R}}{\Hom}(-,N)$ are left exact on the category of $\Gamma$-graded left $R$-modules.

Let $M$ be a $\Gamma$-graded right $R$-module and $N$ be a $\Gamma$-graded left $R$-module. As in \cite{no82} and \cite{ss25}, the tensor product $M\otimes_RN$ is a $\Gamma$-graded $R$-module with $(M\otimes_R N)_{\alpha}$ generated over $R_{\varepsilon}$ by the tensor products $m\otimes n$ of homogeneous elements $m\in H(M)$, $n\in H(N)$ with $\deg(m)+\deg(n)=\alpha$.

\section{Main results}

Let $\Gamma$ be a cancelation monoid with neutral element $\varepsilon$ and $R=\bigoplus_{\alpha \in \Gamma}R_{\alpha}$ a $\Gamma$-graded ring. We begin with the following graded version of adjoint isomorphism which is stated in \cite[Proposition 1.2.14]{no82} in the case where $\Gamma$ is a group.

\begin{proposition}(Graded adjoint isomorphism)\label{adj}
Assume that $R=\bigoplus_{\alpha \in \Gamma}R_{\alpha}$ and $S=\bigoplus_{\alpha \in \Gamma}S_{\alpha}$ are $\Gamma$-graded rings. Assume that $M$, $N$, $L$ are $\Gamma$-graded right $R$-module, $R-S$-bimodule, and right $S$-module, respectively. Then there is a natural isomorphism
$$\tau_{N,M,L}:\sideset{^*}{_{S}}{\Hom}(N\otimes_RM,L)\to\sideset{^*}{_{R}}{\Hom}(N,\sideset{^*}{_{S}}{\Hom}(M,L)),$$ such that $\tau_{N,M,L}(\sideset{^*}{_{S,\alpha}}{\Hom}(N\otimes_RM,L))=\sideset{^*}{_{R,\alpha}}{\Hom}(N,\sideset{^*}{_{S}}{\Hom}(M,L))$ for $\alpha\in\Gamma$.
\end{proposition}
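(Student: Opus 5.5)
The plan is to restrict the classical (ungraded) adjoint isomorphism $\tau:\Hom_S(N\otimes_RM,L)\to\Hom_R(N,\Hom_S(M,L))$, given by $\tau(f)(n)(m)=f(n\otimes m)$, to the graded pieces. Before doing so I will fix the module structures so that the statement type-checks. Read as written, the sides do not match: $N$ is an $R$-$S$-bimodule, $M$ is a right $R$-module and $L$ is a right $S$-module. The intended setup, as in \cite[Proposition 1.2.14]{no82}, is $M$ a graded $R$-$S$-bimodule, $N$ a graded right $R$-module and $L$ a graded right $S$-module. Then $N\otimes_RM$ is a graded right $S$-module, and $\sideset{^*}{_{S}}{\Hom}(M,L)$ is a graded right $R$-module via $(g r)(m)=g(rm)$.

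The first step is to check that this $R$-action respects the grading. Since $\Gamma$ need not be commutative, I will fix the degree convention $f(M_\beta)\subseteq L_{\beta\alpha}$ from the introduction. If $g$ has degree $\gamma$ and $r\in R_\delta$, then $(gr)(M_\beta)\subseteq g(M_{\delta\beta})\subseteq L_{\delta\beta\gamma}$. This only gives a well-defined degree once the conventions for left versus right actions are arranged compatibly, for example by putting Hom on the appropriate side. I will settle this exactly as in \cite{no82}, and I will note that cancellation in $\Gamma$ is what makes the degree of a nonzero homogeneous map unique, so that $\bigoplus_\alpha \Hom_{R,\alpha}$ is an internal direct sum.

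The second step is to show that $\tau$ maps $\Hom_{S,\alpha}(N\otimes_RM,L)$ into $\Hom_{R,\alpha}(N,\sideset{^*}{_{S}}{\Hom}(M,L))$. Take $f$ of degree $\alpha$ and $n\in N_\beta$. Then $\tau(f)(n)$ sends $M_\gamma$ into $L_{\beta\gamma\alpha}$, because $n\otimes m$ has degree $\beta\gamma$. So $\tau(f)(n)$ is homogeneous of degree $\beta\alpha$ as a map $M\to L$ in the shifted sense, and lies in $\sideset{^*}{_{S}}{\Hom}(M,L)$. It follows that $\tau(f)$ sends $N_\beta$ into the degree-$\beta\alpha$ component, so $\tau(f)$ has degree $\alpha$.

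The third step is the converse. Given $h$ of degree $\alpha$, the inverse map $f(n\otimes m)=h(n)(m)$ is the classical inverse $\tau^{-1}(h)$, and the same degree count read backwards shows that $f$ is homogeneous of degree $\alpha$. Since the graded tensor product is generated by homogeneous tensors $n\otimes m$, it suffices to check this on those generators. Summing over $\alpha$ then gives the bijection $\tau$ between the ${}^*\Hom$'s. Naturality and additivity are inherited from the ungraded case.

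The main obstacle is the bookkeeping in the first step: choosing side conventions so that the degrees in the non-commutative monoid compose correctly. In particular, $\tau(f)(n)$ must be homogeneous of a single degree, which is why the target has to be the graded ${}^*\Hom$ rather than the full $\Hom$. I will first try to verify that the convention $f(M_\beta)\subseteq L_{\beta\alpha}$ gives degree $\beta\alpha$ with no cancellation issues. If it does not, I will write $\Hom$ for right modules with degrees acting on the left.
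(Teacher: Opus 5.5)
Your route is the paper's route: the paper takes the classical map $\tau(f)(x)(y)=f(x\otimes y)$ of \cite[Theorem 2.75]{ro09} and calls the graded verification straightforward. Your reading of the hypotheses is also correct. As stated, the roles of $M$ and $N$ are interchanged. The isomorphism only makes sense with $N$ a graded right $R$-module, $M$ a graded $R$-$S$-bimodule and $L$ a graded right $S$-module, which are Rotman's $A_R$, ${}_RB_S$, $C_S$.

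What is wrong as written is the degree count in your second step. The first convention you propose to try cannot be rescued when $\Gamma$ is noncommutative.
Under $g(M_\gamma)\subseteq L_{\gamma\delta}$, a map sending each $M_\gamma$ into $L_{\beta\gamma\alpha}$ has degree $\beta\alpha$ only if $\beta\gamma=\gamma\beta$.
That convention is also degenerate for right modules. Suppose $f$ is right $S$-linear with $f(X_\beta)\subseteq L_{\beta\alpha}$, and take $x\in X_\beta$ and $s\in S_\gamma$. Then $f(xs)=f(x)s$ lies in $L_{\beta\gamma\alpha}\cap L_{\beta\alpha\gamma}$, so it is $0$ whenever $\gamma\alpha\neq\alpha\gamma$.

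So adopt your fallback from the start: a map of right modules has degree $\alpha$ if $f(X_\beta)\subseteq L_{\alpha\beta}$. With this convention the bookkeeping closes up.
\begin{itemize}
\item For $g\in{}^*\Hom_S(M,L)$ of degree $\delta$ and $r\in R_\lambda$, one gets $(gr)(M_\mu)=g(rM_\mu)\subseteq L_{\delta\lambda\mu}$. Hence ${}^*\Hom_S(M,L)$ is a graded right $R$-module.
\item For $f$ of degree $\alpha$ and $n\in N_\beta$, $\tau(f)(n)$ sends $M_\gamma$ into $L_{\alpha\beta\gamma}$, so it has degree $\alpha\beta$. Therefore $\tau(f)$ has degree $\alpha$, which is exactly the degree preservation claimed.
\item Your third step is the same count read backwards.
\end{itemize}

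Finally, for the directness of $\bigoplus_\alpha\Hom_{S,\alpha}$, uniqueness of the degree of a single map is not quite enough. Evaluate $\sum_\alpha f_\alpha=0$ at a homogeneous $x\in X_\beta$, and use that the degrees $\alpha\beta$ are pairwise distinct by right cancellation.
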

\begin{proof}
For $f\in\sideset{^*}{_{S,\alpha}}{\Hom}(N\otimes_RM,L)$, $\alpha\in\Gamma$, let $\tau_{N,M,L}:f\mapsto\tau(f)$ with $\tau(f)(x)(y)=f(x\otimes y)$ where $x\in N$ and $y\in M$. It is the straightforward graded version of the classical argument in the classical situation that $\tau_{N,M,L}$ is a well defined graded isomorphism \cite[Theorem 2.75]{ro09}.
\end{proof}

The notion of graded injective modules was investigated in \cite{ff74, bh98} in the case where $\Gamma=\mathbb{Z}$, in \cite{no04} in the case where $\Gamma$ is a group, and in \cite{ss25} in the general case of cancelation monoids. Recall that a $\Gamma$-graded left $R$-module $E$ is a \emph{graded injective module} (for short, \emph{gr-injective module}) if $\sideset{^*}{_{R}}{\Hom}(-,E)$ is an exact functor in the category of $\Gamma$-graded left $R$-modules. In \cite[Theorem 3.7]{ss25}, it is shown that every $\Gamma$-graded left $R$-module $M$ can be embedded in a graded injective envelope $\mathrm{E}^{gr}(M)$. A $\Gamma$-graded left $R$-module $E$ is said to be a \emph{graded injective cogenerator} if $E$ is gr-injective, and for every $\Gamma$-graded left $R$-module $M$ and every non-zero (homogeneous) element $x$ of $M$, there is a homogeneous homomorphism $\phi:M\to E$ such that $\phi(x)\neq0$.

Let $\{M_{j}\}_{j\in J}$ be a family of $\Gamma$-graded left $R$-modules. The direct sum $\bigoplus_{j}M_j$ exists in the category of $\Gamma$-graded left $R$-modules with $(\bigoplus_{j}M_j)_{\alpha}=\bigoplus_{j}(M_j)_{\alpha}$ for all $\alpha\in\Gamma$. Recall that a $\Gamma$-graded left $R$-module $F=\bigoplus_{\alpha\in\Gamma}F_{\alpha}$ is called \emph{graded free} (for short, \emph{gr-free}) if $F$ is a free $R$-module (in the usual sense) but with a homogeneous free $R$-basis; namely, $F$ has a free $R$-basis $\{e_i\}_{i\in J}$ consisting of homogeneous elements.

\begin{example}\label{}
Assume that $R=\bigoplus_{\alpha \in \Gamma}R_{\alpha}$ is a $\Gamma$-graded ring. One notices that, for every $\alpha \in \Gamma$, there is a gr-free $R$-module with homogeneous basis $e_{\alpha}$ of degree $\alpha$, $Re_{\alpha}$ \cite[Page 2698]{li12}. Then $\widetilde{\mathbb{E}}:=\mathrm{E}^{gr}(\bigoplus_{\alpha, \fm}Re_{\alpha}/\fm e_{\alpha})$ is a graded injective cogenerator of $R$, in which $\alpha$ varies in $\Gamma$ and $\fm$ varies in the set of all maximal homogeneous left ideals of $R$. Indeed, let $M$ be a nonzero $\Gamma$-graded left $R$-module and $0\neq x\in M$ is a homogeneous element of degree $\alpha$. It can be see that $f_{\alpha}:Rx\to Re_{\alpha}/\Ann(x)e_{\alpha}$ defined by $f_{\alpha}(rx)=re_{\alpha}+\Ann(x)e_{\alpha}$ is a homogeneous isomorphism. Choose now a maximal homogeneous left ideal $\fm$ of $R$ containing $\Ann(x)$ and consider the natural homogeneous epimorphism $g_{\alpha}:Re_{\alpha}/\Ann(x)e_{\alpha}\to Re_{\alpha}/\fm e_{\alpha}$. Then the composition $g_{\alpha}f_{\alpha}:Rx\to Re_{\alpha}/\fm e_{\alpha}$ is a homogeneous homomorphism such that $(g_{\alpha}f_{\alpha})(x)\neq0$. Consider the homogeneous embedding $Re_{\alpha}/\fm e_{\alpha}\hookrightarrow\widetilde{\mathbb{E}}$. Hence we have constructed a homogeneous homomorphism $h:Rx\to \widetilde{\mathbb{E}}$ such that $h(x)\neq0$. This homomorphism extends to a homogeneous homomorphism $\varphi:M\to \widetilde{\mathbb{E}}$ with $\varphi(x)=h(x)\neq0$ by the graded injectivity of $\widetilde{\mathbb{E}}$.
\end{example}

Let $\mathbb{E}$ be a $\Gamma$-graded injective cogenerator left $R$-module. For each $\Gamma$-graded right $R$-module $M$, set $M':=\sideset{^*}{_{R}}{\Hom}(M,\mathbb{E})$, which is a $\Gamma$-graded left $R$-module, and for a homogeneous homomorphism $\varphi:M\to N$ of $\Gamma$-graded right $R$-modules, the induced homogeneous homomorphism $\varphi':N'\to M'$ is defined by $\varphi'(f):=f\varphi$ for $f\in\Hom_{R,\alpha}(N,\mathbb{E})$, $\alpha\in\Gamma$.

\begin{lemma}\label{exact}
Assume that $R=\bigoplus_{\alpha \in \Gamma}R_{\alpha}$ is a $\Gamma$-graded ring and $\mathbb{E}$ is a $\Gamma$-graded injective cogenerator left $R$-module. Then a sequence $M\stackrel{\varphi}{\to} N\stackrel{\psi}{\to} L$ of $\Gamma$-graded right $R$-modules and homogeneous homomorphisms is exact if and only if the induced sequence $L'\stackrel{\psi'}{\to} N'\stackrel{\varphi'}{\to} M'$ is exact.
\end{lemma}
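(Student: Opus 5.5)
The plan is to follow the classical argument that $\Hom(-,E)$ for an injective cogenerator $E$ is faithfully exact, adapted to the graded setting. There is a small wrinkle: $\mathbb{E}$ is a left module while $M,N,L$ are right modules. I will read $M'=\sideset{^*}{_{R}}{\Hom}(M,\mathbb{E})$ as homomorphisms into a graded injective cogenerator on the appropriate side, so that graded injectivity and the cogenerator property apply to right modules. The argument uses only two properties: exactness of $\sideset{^*}{_{R}}{\Hom}(-,\mathbb{E})$ on graded modules, and the existence, for each nonzero homogeneous element $x$ of a graded module $X$, of a homogeneous $\phi:X\to\mathbb{E}$ with $\phi(x)\neq 0$.

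\emph{Forward direction.} Assume $M\to N\to L$ is exact. Factor it through the graded submodules $K=\ker\psi=\im\varphi$ and $\im\psi$. This gives a graded short exact sequence $0\to K\to N\to \im\psi\to 0$, together with a graded surjection $M\to K$ and a graded injection $\im\psi\to L$. Since $\sideset{^*}{_{R}}{\Hom}(-,\mathbb{E})$ is exact, it turns surjections into injections, injections into surjections, and short exact sequences into short exact sequences. Splicing the three dual pieces gives exactness of $L'\to N'\to M'$ at $N'$. This step is routine.

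\emph{Converse.} Assume $L'\to N'\to M'$ is exact. First, $\psi\varphi=0$. Suppose not, and pick a homogeneous $m\in M$ with $\psi\varphi(m)\neq0$; this is possible because $\psi\varphi$ is homogeneous and $M$ is generated by homogeneous elements. The cogenerator property gives a homogeneous $f:L\to\mathbb{E}$ with $f\psi\varphi(m)\neq0$. But $f\psi\varphi=\varphi'\psi'(f)=0$, a contradiction.

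Second, $\ker\psi\subseteq\im\varphi$. Suppose not. The quotient $N/\im\varphi$ is graded, and since $\ker\psi$ is graded there is a homogeneous $n\in\ker\psi$ whose image $\bar n$ in $N/\im\varphi$ is nonzero. Choose a homogeneous $g:N/\im\varphi\to\mathbb{E}$ with $g(\bar n)\neq0$, and let $h:N\to\mathbb{E}$ be the composite with the projection. Then $\varphi'(h)=h\varphi=0$, so by exactness $h=\psi'(f)=f\psi$ for some $f\in L'$. Hence $h(n)=f(\psi(n))=0$, a contradiction.

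The main obstacle is a bookkeeping point in the converse. Exactness at $N'$ is stated for all of $N'=\bigoplus_\alpha\Hom_{R,\alpha}$, whereas the maps $g$ and $h$ constructed above are homogeneous of degree $\varepsilon$; I must check that they are honest elements of $N'$. They are, since they lie in the degree-$\varepsilon$ component, so the argument goes through.

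Care is also needed to argue with homogeneous elements throughout. This is what makes the cogenerator property, which is stated only for homogeneous elements, applicable, and it is justified because all kernels, images and quotients involved are graded.
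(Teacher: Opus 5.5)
Your proposal is correct and is exactly the graded adaptation of the classical character-module argument \cite[Proposition 4.8]{la98} that the paper invokes without writing out: the forward direction follows from exactness of $\sideset{^*}{_{R}}{\Hom}(-,\mathbb{E})$, and the converse applies the cogenerator property to a homogeneous element witnessing either $\psi\varphi\neq 0$ or $\ker\psi\not\subseteq\im\varphi$. Placing $\mathbb{E}$ on the correct side is a sensible repair of the left/right mismatch in the paper's setup and does not affect the argument.
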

\begin{proof}
The argument is a straightforward graded version of the classical argument, see \cite[Proposition 4.8, Page 124]{la98}.
\end{proof}

Recall that a $\Gamma$-graded left $R$-module $F$ is a \emph{graded-flat module} (for short, \emph{gr-flat module}) if $f\otimes_R1_F:M\otimes_RF\to N\otimes_RF$ is a homogeneous monomorphism for every homogeneous monomorphism $f:M\to N$ of $\Gamma$-graded right $R$-modules $M, N$. It is known that $F$ is gr-flat if and only if it is flat \cite[Corollary 3.13]{ss25}.

\begin{proposition}\label{gradj}
Assume that $R=\bigoplus_{\alpha \in \Gamma}R_{\alpha}$ is a $\Gamma$-graded ring and $\mathbb{E}$ is a $\Gamma$-graded injective cogenerator left $R$-module. Then a $\Gamma$-graded right $R$-module $M$ is gr-flat if and only if $M'$ is a gr-injective left $R$-module.
\end{proposition}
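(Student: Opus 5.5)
The plan is the graded analogue of Lambek's theorem: combine the graded adjoint isomorphism (Proposition~\ref{adj}) with the exactness criterion of Lemma~\ref{exact}.

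Let $f:A\to B$ be a homogeneous monomorphism of $\Gamma$-graded left $R$-modules. Apply Proposition~\ref{adj} with $S=R$, $N=M$ (regarded as a $\mathbb{Z}$--$R$-bimodule, or simply as the appropriate bimodule), $M=A$ or $B$, and $L=\mathbb{E}$. This should give natural isomorphisms
\[
\sideset{^*}{_{R}}{\Hom}(A,M')\cong (M\otimes_R A)' \quad\text{and}\quad \sideset{^*}{_{R}}{\Hom}(B,M')\cong (M\otimes_R B)'.
\]
Because $\tau$ is natural, these isomorphisms form a commutative square with the maps $\sideset{^*}{_{R}}{\Hom}(f,M')$ and $(1_M\otimes f)'$. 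Consequently $\sideset{^*}{_{R}}{\Hom}(B,M')\to\sideset{^*}{_{R}}{\Hom}(A,M')$ is surjective if and only if $(M\otimes_R B)'\to(M\otimes_R A)'$ is surjective.

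By Lemma~\ref{exact}, applied to the sequence $0\to M\otimes_R A\to M\otimes_R B$ (whose dual is $(M\otimes_R B)'\to(M\otimes_R A)'\to 0$), the second surjectivity is equivalent to $1_M\otimes f$ being injective. Note that $1_M\otimes f$ is homogeneous, so the lemma applies.

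Assembling the pieces: $M$ is gr-flat exactly when $1_M\otimes f$ is monic for every homogeneous monomorphism $f$. This holds exactly when $\sideset{^*}{_{R}}{\Hom}(-,M')$ carries every homogeneous monomorphism to an epimorphism. Since $\sideset{^*}{_{R}}{\Hom}(-,M')$ is already left exact, that condition is equivalent to its exactness, i.e., to $M'$ being gr-injective.

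The main obstacle is the bookkeeping in the adjoint step. One must check the following:
\begin{enumerate}
\item The bimodule and side conventions work out: $M$ is a right $R$-module, $A$ is a left $R$-module, and $\mathbb{E}$ is a left $R$-module, whereas Proposition~\ref{adj} is stated for right $S$-modules. So either $\mathbb{E}$ is viewed over $\mathbb{Z}$ with trivial grading, or the symmetric left-hand version of the proposition is used.
\item $M'$ really is a graded left $R$-module.
\item The isomorphism $\tau$ preserves degrees and is natural in the middle variable.
\end{enumerate}
I would state the left/right-switched form of Proposition~\ref{adj} explicitly and verify naturality directly: for $g\in(M\otimes_R B)'$ one has $\tau(g\circ(1\otimes f))=\tau(g)\circ f$, and this follows immediately from $\tau(g)(a)(m)=g(m\otimes a)$.
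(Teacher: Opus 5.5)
Your proposal is correct and is essentially the paper's own proof: the paper likewise uses the naturality square coming from the graded adjoint isomorphism (Proposition~\ref{adj}) together with Lemma~\ref{exact} to match injectivity of $1_M\otimes f$ with surjectivity of $\sideset{^*}{_{R}}{\Hom}(f,M')$; it draws the diagram only for the converse and handles the forward direction with the phrase ``using the adjoint isomorphism.'' The left/right bookkeeping you flag is glossed over in the paper too, but of your two proposed fixes only the bimodule (left-hand) version of Proposition~\ref{adj} proves the statement as written, and it is available e.g.\ when $R$ is commutative; passing to $\mathbb{Z}$ would replace $M'=\sideset{^*}{_{R}}{\Hom}(M,\mathbb{E})$ by a different module.
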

\begin{proof}
Assume that $M$ is a $\Gamma$-graded right gr-flat $R$-module. Using the adjoint isomorphism, it can be seen that $\sideset{^*}{_{R}}{\Hom}(-,M')$ is an exact functor. Thus $M'$ is a gr-injective left $R$-module.

Conversely, assume that $M$ is a $\Gamma$-graded right $R$-module such that $M'$ is a gr-injective left $R$-module. Let $N_1\to N_2$ be a homogeneous monomorphism of $\Gamma$-graded left $R$-modules. By Proposition \ref{adj} the following diagram commutes:
\begin{displaymath}
\xymatrix{ \sideset{^*}{_{R}}{\Hom}(N_2,M') \ar[r] \ar[d]_{\cong} &
\sideset{^*}{_{R}}{\Hom}(N_1,M') \ar[d]^{\cong}   \\
(N_2\otimes_RM)' \ar[r] & (N_1\otimes_RM)'. }
\end{displaymath}
Since the bottom homomorphism is an epimorphism, Lemma \ref{exact} shows that $0\to N_1\otimes_RM\to N_2\otimes_RM$ is exact. Hence $M$ is gr-flat.
\end{proof}

Assume that $M$ is a $\Gamma$-graded left (resp. right) $R$-module. A \emph{graded flat resolution} of $M$ is an exact sequence $$\cdots\to F_2\to F_1\to F_0\to M\to 0,$$ in which each $F_i$ is a gr-flat left (resp. right) $R$-module with homogeneous homomorphisms. We say that gr-$\fd_R(M)\leq n$ (gr-$\fd_R(M)$ abbreviates \emph{graded flat dimension}) if there is a finite graded flat resolution $$0\to F_n\to\cdots\to F_1\to F_0\to M\to 0.$$ If no such finite resolution exists, then we write gr-$\fd_R(M)=\infty$. Otherwise, define gr-$\fd_R(M)=n$ if $n$ is the length of a shortest graded flat resolution of $M$.

\begin{remark}\label{}{\em
Assume that $R=\bigoplus_{\alpha \in \Gamma}R_{\alpha}$ is a $\Gamma$-graded ring and $M$ is a $\Gamma$-graded left $R$-module. Then $$\text{gr-}\fd_R(M)=\fd_R(M),$$ where $\fd_R(M)$ is the usual flat dimension of $M$. Indeed, since gr-flat $R$-modules are flat \cite[Corollary 3.13]{ss25}, we have $\fd_R(M)\leq$gr-$\fd_R(M)$. Assume now that $t:=\fd_R(M)$ is finite and consider a graded flat resolution $$\cdots\to F_1\to F_0\to M\to 0$$ of $M$. Let $K_t:=\ker(F_t\to F_{t-1})$ which is a $\Gamma$-graded left $R$-module. Then we obtain an exact sequence $$0\to K_t\to F_{t-1}\to\cdots\to F_1\to F_0\to M\to 0.$$ Since $\fd_R(M)=t$, $K_t$ is a flat $R$-module by \cite[Proposition 8.17]{ro09}. Hence $K_t$ is a gr-flat $R$-module. It follows that gr-$\fd_R(M)\leq\fd_R(M)$.}
\end{remark}

The \emph{graded left weak dimension} of a $\Gamma$-graded ring $R$ is defined by
$$\text{gr-}\ell\text{wD}(R):=\sup\{\fd_R(M)\mid M\text{ is a }\Gamma\text{-graded left }R\text{-module}\}.$$
The \emph{graded right weak dimension} of $R$ is defined similarly and denoted by $\text{gr-rwD}(R)$.

\begin{proposition}\label{}
Assume that $R=\bigoplus_{\alpha \in \Gamma}R_{\alpha}$ is a $\Gamma$-graded ring. Then $$\text{gr-}\ell\text{wD}(R)=\text{gr-rwD}(R).$$
\end{proposition}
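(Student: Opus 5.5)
The plan is to imitate the classical proof that the left and right weak dimensions coincide, both being the supremum of $\fd$ over modules of the form $R/I$. Here we use graded cyclic modules and the duality $M\mapsto M'=\sideset{^*}{_{R}}{\Hom}(M,\mathbb{E})$. By symmetry it suffices to show $\text{gr-rwD}(R)\leq\text{gr-}\ell\text{wD}(R)$. Assume $n:=\text{gr-}\ell\text{wD}(R)<\infty$ and let $M$ be a $\Gamma$-graded right $R$-module. We must show $\fd_R(M)\leq n$.

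First take a graded flat (indeed gr-free) resolution of $M$ and let $K$ be the $(n-1)$-st syzygy, i.e.\ the kernel at stage $n-1$. Then $K$ is a $\Gamma$-graded right $R$-module, and by the Remark above together with the dimension-shifting fact \cite[Proposition 8.17]{ro09}, it suffices to show $K$ is flat, equivalently gr-flat \cite[Corollary 3.13]{ss25}. By Proposition \ref{gradj}, this amounts to showing $K'$ is gr-injective.

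For this I would use a graded Baer-type criterion. In the category of $\Gamma$-graded left modules, an object $E$ is gr-injective as soon as $\sideset{^*}{_{R}}{\Hom}(R,E)\to\sideset{^*}{_{R}}{\Hom}(I,E)$ is onto for every homogeneous left ideal $I$ (and its shifts). This is the standard Zorn argument, and I expect it holds in \cite{ss25}. Via Proposition \ref{adj}, surjectivity of this map is equivalent to injectivity of $I\otimes_R K\to R\otimes_R K$, read through $(-)'$ using Lemma \ref{exact}; that is, to $\Tor_1^R(K,R/I)=0$. Dimension shifting gives $\Tor_1^R(K,R/I)\cong\Tor_{n+1}^R(M,R/I)$. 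Computing this Tor with a flat resolution of the graded left module $R/I$ of length at most $n$, which exists because $\fd_R(R/I)\leq n$ by hypothesis, shows it vanishes. Hence $K'$ is gr-injective, so $K$ is flat and $\fd_R(M)\leq n$.

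The case $n=\infty$ is trivial. The reverse inequality follows by the same argument with left and right interchanged, which requires a right-module version of the cogenerator $\mathbb{E}$ and of Proposition \ref{gradj}; these are obtained symmetrically.

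The main obstacle is the graded Baer criterion. One must check that testing against homogeneous left ideals, including degree-shifted copies $Re_\alpha$, suffices, and this depends on the monoid (not group) setting where shifts $Re_\alpha\supseteq Ie_\alpha$ must be used. If it is not directly available, I would prove it via Zorn's lemma on homogeneous partial extensions, extending one homogeneous element at a time using the annihilator-type homogeneous left ideal $\{r\in R: rx\in N\}$.
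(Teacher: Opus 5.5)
Your argument is correct, but it takes a longer route than the paper. The paper runs the graded analogue of \cite[Proposition 8.17]{ro09}: for a graded module $M$, $\text{gr-}\fd_R(M)\le n$ if and only if $\Tor^R_{n+1}(M,N)=0$ for every graded module $N$ on the other side. Hence $\text{gr-}\ell\text{wD}(R)\le n$ and $\text{gr-rwD}(R)\le n$ are both equivalent to one symmetric condition, $\Tor^R_{n+1}(M,N)=0$ for all graded right $M$ and graded left $N$, and equality follows at once. The only input needed is that $\Tor^R_1(K,N)=0$ for all graded $N$ makes $K$ gr-flat. This is immediate from the long exact Tor sequence of $0\to N_1\to N_2\to N_2/N_1\to 0$, since the quotient is graded.

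You instead test $K$ only against cyclic modules $R/I$ with $I$ homogeneous. That choice is what brings in the duality $K\mapsto K'$, Proposition \ref{gradj}, the adjunction, Lemma \ref{exact} and the graded Baer criterion. The criterion is available as \cite[Theorem 3.2]{ss25}, exactly as used in Lemma \ref{grtor}. (Also, for a right module $K$ the map you need is $K\otimes_R I\to K\otimes_R R$, not $I\otimes_R K\to R\otimes_R K$.)

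Your hypothesis already bounds $\fd_R(N)$ for every graded left $N$, not just cyclic ones. So you could use $\Tor^R_1(K,N)\cong\Tor^R_{n+1}(M,N)=0$ for all such $N$ and skip the step you flagged as the main obstacle.

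What your route buys is the sharper bound $\text{gr-rwD}(R)\le\sup\{\fd_R(R/I)\mid I \text{ a homogeneous left ideal}\}$. This is essentially the mirror image of Lemma \ref{grtor} and Theorem \ref{grwD}, which the paper proves only afterwards. The paper's route buys economy and independence from the injective cogenerator.
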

\begin{proof}
By the same argument as in \cite[Proposition 8.17]{ro09}, for a non-negative integer $n$, one can show that for a $\Gamma$-graded right $R$-module $M$, $\text{gr-}\fd_R(M)\leq n$ if and only if $\Tor^R_{n+1}(M,N)=0$ for all $\Gamma$-graded left $R$-module $N$. It results that $\text{gr-}\ell\text{wD}(R)\leq n$ if and only if $\Tor^R_{n+1}(M,N)=0$ for all $\Gamma$-graded right $R$-module $M$ and $\Gamma$-graded left $R$-module $N$. Similarly, one has $\text{gr-}r\text{wD}(R)\leq n$ if and only if $\Tor^R_{n+1}(M,N)=0$ for all $\Gamma$-graded right $R$-module $M$ and $\Gamma$-graded left $R$-module $N$. Therefore $\text{gr-}\ell\text{wD}(R)=\text{gr-rwD}(R)$.
\end{proof}

The \emph{graded weak dimension} gr-$\text{wD}(R)$ of a $\Gamma$-graded ring $R$ is the common value of $\text{gr-}\ell\text{wD}(R)$ and $\text{gr-rwD}(R)$. In general, we have gr-$\text{wD}(R)\leq\text{wD}(R)$, where $\text{wD}(R):=\sup\{\fd_R(M)\mid M\text{ is a left }R\text{-module}\}$ is the usual weak dimension of $R$ and the equality can not be hold in general, see Remark \ref{rem}.

\begin{lemma}\label{grtor}
Assume that $R=\bigoplus_{\alpha \in \Gamma}R_{\alpha}$ is a $\Gamma$-graded ring and $N$ is a $\Gamma$-graded left $R$-module. Then $N$ is gr-flat if and only if $\Tor^R_1(R/I,N)=0$ for every homogeneous right ideal $I$ of $R$.
\end{lemma}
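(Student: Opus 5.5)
We prove the graded analogue of the classical ideal criterion for flatness, using the character-module duality of Proposition \ref{gradj} together with a graded Baer criterion.

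\emph{Forward direction.} Assume $N$ is gr-flat. Then $N$ is flat by \cite[Corollary 3.13]{ss25}, so $\Tor^R_1(M,N)=0$ for every right module $M$. In particular $\Tor^R_1(R/I,N)=0$ for every homogeneous right ideal $I$.

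\emph{Converse, reduction to character modules.} Suppose $\Tor^R_1(R/I,N)=0$ for every homogeneous right ideal $I$. The exact sequence $0\to I\to R\to R/I\to 0$ of graded right modules and homogeneous maps gives that $I\otimes_R N\to R\otimes_R N\cong N$ is injective. Fix the graded injective cogenerator $\mathbb{E}$, here taken as a graded right $R$-module; the construction in the Example works equally on the right. Set $N'=\sideset{^*}{_{R}}{\Hom}(N,\mathbb{E})$, a graded right $R$-module. Applying Lemma \ref{exact} (its left-module version) to the injection above shows that
$$(R\otimes_R N)'\to (I\otimes_R N)'$$
is surjective. By the adjoint isomorphism Proposition \ref{adj}, this map is identified naturally with the restriction map $\sideset{^*}{_{R}}{\Hom}(R,N')\to \sideset{^*}{_{R}}{\Hom}(I,N')$. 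So every homogeneous homomorphism (of any degree) $I\to N'$ extends to $R$.

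\emph{Graded Baer criterion.} We need the fact that a graded module $E$ is gr-injective as soon as $\sideset{^*}{_{R}}{\Hom}(R,E)\to\sideset{^*}{_{R}}{\Hom}(I,E)$ is onto for every homogeneous ideal $I$. This is the graded Baer criterion for cancellation monoids, available in \cite{ss25}, and we invoke it there. If it has to be reproved, the argument is Zorn's lemma on pairs (graded submodule, homogeneous extension of degree $\alpha$), which shows that a maximal extension is defined everywhere. To extend past a homogeneous element $x$ of degree $\beta$, one uses the homogeneous ideal $\{r : rx\in M_0\}$ or $\{r : xr\in M_0\}$ depending on the side. The degree bookkeeping requires care because $\Gamma$ is only a monoid: a degree-$\alpha$ map $I\to E$ must be reindexed against the shift by $\beta$. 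This step is the main obstacle. Cancellation in $\Gamma$ ensures that shifting by $\beta$ is injective on degrees, so degrees can be tracked uniquely. The same reindexing also lets us realize $R x\cong (R/\Ann x)$ with a shifted grading, as in the Example.

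\emph{Conclusion.} By the criterion, $N'$ is gr-injective. By Proposition \ref{gradj} (with left and right interchanged), $N$ is gr-flat.
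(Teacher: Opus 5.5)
Your proposal is correct and follows the paper's proof step for step. Vanishing of $\Tor^R_1(R/I,N)$ gives injectivity of $I\otimes_R N\to R\otimes_R N$. Lemma \ref{exact} and Proposition \ref{adj} turn this into surjectivity of $\sideset{^*}{_{R}}{\Hom}(R,N')\to\sideset{^*}{_{R}}{\Hom}(I,N')$. The graded Baer criterion \cite[Theorem 3.2]{ss25} then makes $N'$ gr-injective, and Proposition \ref{gradj} gives gr-flatness. Your explicit left/right bookkeeping and your optional sketch of the Baer criterion are extras that the paper either leaves implicit or delegates to \cite{ss25}.
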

\begin{proof}
One implication is easy. For the other one, assume that $\Tor^R_1(R/I,N)=0$ for every homogeneous right ideal $I$ of $R$. Thus $0\to I\otimes_RN\to R\otimes_RN$ is exact for every homogeneous right ideal $I$ of $R$. It follows from Lemma \ref{exact} that $(R\otimes_RN)'\to (I\otimes_RN)'\to 0$ is exact for every homogeneous right ideal $I$ of $R$. Hence by Proposition \ref{adj}, the sequence $\sideset{^*}{_{R}}{\Hom}(R,N')\to\sideset{^*}{_{R}}{\Hom}(I,N')\to0$ is exact for every homogeneous right ideal $I$ of $R$. Therefore $N'$ is gr-injective by \cite[Theorem 3.2]{ss25}. This shows that $N$ is gr-flat by Proposition \ref{gradj}.
\end{proof}

\begin{theorem}\label{grwD}
Assume that $R=\bigoplus_{\alpha \in \Gamma}R_{\alpha}$ is a $\Gamma$-graded ring. Then
\begin{align*}
\text{gr-wD}(R)=&\sup\{\fd_R(R/I)\mid I \text{ is a homogeneous right ideal}\}\\
=&\sup\{\fd_R(R/J)\mid J \text{ is a homogeneous left ideal}\}.
\end{align*}
\end{theorem}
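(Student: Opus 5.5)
The plan is to follow the classical proof that the weak dimension is detected by cyclic modules $R/I$, replacing the ideal criterion for flatness by Lemma \ref{grtor} and using Tor dimension shifting. Let $d$ denote the first supremum, over homogeneous right ideals $I$. For each such $I$, $R/I$ is a $\Gamma$-graded right $R$-module, so $\fd_R(R/I)\le \text{gr-rwD}(R)=\text{gr-wD}(R)$. Hence $d\le\text{gr-wD}(R)$. The same argument, using the left weak dimension, bounds the second supremum.

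For the reverse inequality, assume $d=n<\infty$; otherwise there is nothing to prove. Let $N$ be any $\Gamma$-graded left $R$-module. I will show $\fd_R(N)\le n$, which gives $\text{gr-}\ell\text{wD}(R)\le n$.

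\begin{enumerate}
\item Take a graded free resolution $\cdots\to F_1\to F_0\to N\to 0$ with homogeneous maps. Such a resolution exists because every graded module is a homogeneous image of a gr-free module on homogeneous generators.
\item Let $K_n=\ker(F_{n-1}\to F_{n-2})$, the $n$-th syzygy. It is a graded submodule, since kernels of homogeneous maps are graded.
\item Since each $F_i$ is flat, dimension shifting gives $\Tor^R_1(R/I,K_n)\cong\Tor^R_{n+1}(R/I,N)$ for every homogeneous right ideal $I$.
\item This group vanishes because $\fd_R(R/I)\le n$, using the classical characterization \cite[Proposition 8.17]{ro09} on the $R/I$ side.
\item By Lemma \ref{grtor}, $K_n$ is gr-flat, hence flat by \cite[Corollary 3.13]{ss25}.
\item Therefore $0\to K_n\to F_{n-1}\to\cdots\to F_0\to N\to 0$ is a flat resolution of length $n$, so $\fd_R(N)\le n$.
\end{enumerate}

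Thus $\text{gr-wD}(R)\le d$, and the first equality holds.

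For the second equality, run the mirror argument with the roles of left and right exchanged. This needs a right-module version of Lemma \ref{grtor}: a graded right module $M$ is gr-flat if and only if $\Tor_1^R(M,R/J)=0$ for all homogeneous left ideals $J$. The proof of that lemma transfers verbatim, using a graded injective cogenerator on the right side together with the right-sided Proposition \ref{gradj} and \cite[Theorem 3.2]{ss25}. Then, for any graded right module $M$, the syzygy argument above shows $\fd_R(M)$ is at most the second supremum, so $\text{gr-rwD}(R)$ is bounded by it. Equality then follows from the first step and from $\text{gr-}\ell\text{wD}(R)=\text{gr-rwD}(R)$.

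The main obstacle is making sure Lemma \ref{grtor} and its right-handed analogue apply as stated, in particular that the side conventions match. Lemma \ref{grtor} tests a left module against right ideals, which is exactly what the first equality needs. The second equality needs the symmetric version. I expect the symmetry to be purely formal, but I would state it explicitly rather than rely on it silently.
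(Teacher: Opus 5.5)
Your proposal is correct and follows the paper's own proof: you dimension-shift along a graded resolution to get $\Tor^R_1(R/I,K_n)\cong\Tor^R_{n+1}(R/I,N)=0$, apply Lemma \ref{grtor} to conclude that the syzygy $K_n$ is gr-flat, and obtain the second equality by the mirror argument. You are slightly more explicit than the paper in two places: you spell out the easy inequality $\sup\{\fd_R(R/I)\}\le\text{gr-wD}(R)$, and you state the right-handed version of Lemma \ref{grtor} that the paper's ``similarly'' tacitly relies on.
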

\begin{proof}
There is nothing to prove if $\sup\{\fd_R(R/I)\mid I \text{ is a homogeneous right ideal}\}=\infty$. Assume that $\fd_R(R/I)\leq n$ for all homogeneous right ideal $I$. Thus $\Tor^R_{n+1}(R/I,N)=0$ for every $\Gamma$-graded left $R$-module $N$. Now fix a $\Gamma$-graded left $R$-module $N$ and take a graded flat resolution of $N$ with $n$th syzygy $K_n$. Then $\Tor^R_{1}(R/I,K_n)\cong\Tor^R_{n+1}(R/I,N)=0$ for all homogeneous right ideal $I$. Hence $K_n$ is gr-flat by Lemma \ref{grtor}. This shows that $\fd_R(N)\leq n$. Therefore the first equality holds. The second equality similarly obtains.
\end{proof}

\begin{corollary}\label{8.26}
Assume that $R=\bigoplus_{\alpha \in \Gamma}R_{\alpha}$ is a $\Gamma$-graded ring. Then every homogeneous left ideal of $R$ is gr-flat if and only if every graded submodule of a gr-flat left $R$-module is gr-flat.
\end{corollary}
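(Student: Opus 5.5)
The reverse implication is immediate: $R$ is gr-free, hence gr-flat, and every homogeneous left ideal is a graded submodule of $R$. So the content is the forward direction, which I would reduce to Theorem \ref{grwD} via the homogeneous right ideals.

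\textbf{Step 1: reformulate the hypothesis.} Suppose every homogeneous left ideal $J$ is gr-flat. From $0\to J\to R\to R/J\to 0$ we get $\fd_R(R/J)\leq 1$ for every homogeneous left ideal $J$; recall that gr-flat and flat coincide by \cite[Corollary 3.13]{ss25}. The second equality of Theorem \ref{grwD} then gives $\text{gr-wD}(R)\leq 1$. Equivalently, every $\Gamma$-graded left or right $R$-module has flat dimension at most $1$, so $\Tor^R_2(M,N)=0$ for all $\Gamma$-graded right $M$ and $\Gamma$-graded left $N$.

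\textbf{Step 2: dimension shift.} Let $F$ be a gr-flat left module and $N\subseteq F$ a graded submodule. Then $F/N$ is a $\Gamma$-graded left module, since $N$ is generated by homogeneous elements and therefore inherits a grading. Take any homogeneous right ideal $I$. The long exact Tor sequence of $0\to N\to F\to F/N\to 0$ gives
$$\Tor^R_2(R/I,F/N)\to\Tor^R_1(R/I,N)\to\Tor^R_1(R/I,F).$$
The right term vanishes because $F$ is flat. The left term vanishes by Step 1, since $R/I$ is a $\Gamma$-graded right module. Hence $\Tor^R_1(R/I,N)=0$ for all homogeneous right ideals $I$.

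\textbf{Step 3: conclude.} By Lemma \ref{grtor}, $N$ is gr-flat.

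\textbf{Main obstacle.} The only delicate point is Step 1: the hypothesis is about left ideals, while Lemma \ref{grtor} tests flatness against right ideals. The left–right symmetry built into Theorem \ref{grwD}, through the equality $\text{gr-}\ell\text{wD}(R)=\text{gr-rwD}(R)$, is exactly what bridges this. I would also confirm that $F/N$ is genuinely graded, so that it lies in the class of modules covered by the graded weak dimension bound.
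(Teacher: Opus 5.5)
Your proof is correct and follows the paper's route. You get $\fd_R(R/J)\leq 1$ for every homogeneous left ideal $J$, apply Theorem \ref{grwD} to obtain $\text{gr-wD}(R)\leq 1$, and then conclude that $N$ is flat from $\fd_R(F/N)\leq 1$ and the flatness of $F$. The only difference is in that last step: the paper cites \cite[Proposition 8.17]{ro09} directly, while you write out the Tor dimension shift and finish with Lemma \ref{grtor}.
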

\begin{proof}
One implication is clear. To prove the other, assume that every homogeneous left ideal of $R$ is gr-flat. Then $0\to I\to R\to R/I\to0$ is a gr-flat resolution of $R/I$ for each homogeneous left ideal $I$. This shows that $\fd_R(R/I)=\text{gr-}\fd_R(R/I)\leq1$ for each homogeneous left ideal $I$; so that $\text{gr-wD}(R)\leq1$ by Theorem \ref{grwD}. Assume now that $N$ is a graded submodule of a gr-flat left $R$-module $M$. Then $\fd_R(M/N)\leq1$. Extend the natural projection $M\to M/N\to0$ to a gr-flat resolution of $M/N$. By \cite[Proposition 8.17]{ro09}, the 0th syzygy of this resolution is (gr-)flat. But the 0th syzygy is $N$. Hence $N$ is gr-flat.
\end{proof}

Recall that a graded ring $R$ is a \emph{graded left (right) coherent ring} if every finitely generated graded left (right) ideal is finitely presented \cite{ss25}. In \cite[Theorem 4.1]{c60}, Chase showed that a ring $R$ is left semihereditary if and only if $R$ is left coherent and every submodule of a flat left $R$-module is flat. We shall now state and prove the principal result of this paper which is a graded analogue of Chase's result.

\begin{theorem}\label{main}
Assume that $R=\bigoplus_{\alpha \in \Gamma}R_{\alpha}$ is a $\Gamma$-graded ring. Then $R$ is graded left semihereditary if and only if $R$ is graded left coherent and every graded submodule of a gr-flat left $R$-module is gr-flat.
\end{theorem}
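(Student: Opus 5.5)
The plan is to reduce everything to the characterization quoted in the introduction, \cite[Proposition 5.5]{ss25}: $R$ is graded left semihereditary if and only if $R$ is graded left coherent and every homogeneous left ideal of $R$ is flat. Combined with Corollary \ref{8.26}, this should give the theorem almost formally. The only point to watch is the translation between ``flat'' and ``gr-flat'' for graded modules, which \cite[Corollary 3.13]{ss25} handles.

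For the forward direction, assume $R$ is graded left semihereditary. By \cite[Proposition 5.5]{ss25}, $R$ is graded left coherent and every homogeneous left ideal of $R$ is flat. A homogeneous left ideal is a $\Gamma$-graded left $R$-module, being generated by homogeneous elements. Hence by \cite[Corollary 3.13]{ss25} each such ideal is gr-flat. Corollary \ref{8.26} then says that every graded submodule of a gr-flat left $R$-module is gr-flat, which is the required conclusion.

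For the converse, assume $R$ is graded left coherent and every graded submodule of a gr-flat left $R$-module is gr-flat. The ring $R$ itself is gr-free on the homogeneous basis $\{1\}$, with $1\in R_{\varepsilon}$, so it is flat and hence gr-flat. Every homogeneous left ideal $I$ is a graded submodule of $R$, so $I$ is gr-flat by hypothesis, and therefore flat by \cite[Corollary 3.13]{ss25}. Together with graded left coherence, \cite[Proposition 5.5]{ss25} yields that $R$ is graded left semihereditary.

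The main (mild) obstacle is making sure the hypotheses match \cite[Proposition 5.5]{ss25} exactly: it is phrased with ``flat'' rather than ``gr-flat'', and with ``homogeneous left ideal'' rather than ``graded submodule''. The two ideal notions coincide, since a homogeneous left ideal is by definition a graded submodule of $R$, and \cite[Corollary 3.13]{ss25} bridges flatness and gr-flatness. Should one prefer to avoid citing \cite[Proposition 5.5]{ss25}, a direct route would take a finitely generated homogeneous left ideal $I$. Coherence makes $I$ finitely presented by a graded presentation, and hence $I$ is finitely presented and flat, so it is projective by the standard result that finitely presented flat modules are projective. The converse direction would still need coherence, which is exactly the content of \cite[Proposition 5.5]{ss25}, so citing it is the cleanest approach.
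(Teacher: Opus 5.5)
Your proof is correct and is essentially the paper's argument: both directions run through Corollary~\ref{8.26} and the identification of flat with gr-flat graded modules. The only difference is that you cite \cite[Proposition 5.5]{ss25} as a black box, while the paper unpacks it: finitely generated projective ideals are finitely presented, which gives coherence; \cite[Remark 3.14(1)]{ss25} passes flatness from finitely generated homogeneous left ideals to all of them; and \cite[Remark 3.14(2)]{ss25} makes finitely presented flat ideals projective. This also shows that your closing aside misplaces the difficulty. Coherence is the trivial part of the forward direction, and what \cite[Proposition 5.5]{ss25} really supplies there is the direct-limit step to non-finitely-generated homogeneous left ideals.
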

\begin{proof}
Assume first that $R$ is graded left semihereditary. Then every finitely generated homogeneous left ideal $I$ is (gr-)projective; hence, $I$ is finitely presented, by \cite[Proposition 3.11]{ro09}. Therefore $R$ is graded left coherent. Since every finitely generated homogeneous left ideal is (gr-)projective, it is (gr-)flat. It follows from \cite[Remark 3.14(1)]{ss25} that every graded left ideal is gr-flat. Hence every graded submodule of a gr-flat left $R$-module is gr-flat by Corollary \ref{8.26}.

Conversely, if $I$ is a finitely generated homogeneous left ideal, then $I$ is a graded submodule of the gr-flat module $R$, and so $I$ is gr-flat; since $R$ is left coherent, $I$ is also finitely presented. Hence, $I$ is projective, by \cite[Remark 3.14(2)]{ss25}, and so $R$ is graded left semihereditary.
\end{proof}

Similarly one can prove the above statement by replacing ``right'' instead of ``left''.

\begin{corollary}\label{wD}
Assume that $R=\bigoplus_{\alpha \in \Gamma}R_{\alpha}$ is a $\Gamma$-graded ring. Then $R$ is graded left (right) semihereditary if and only if $R$ is graded left (right) coherent and $\text{gr-wD}(R)\leq1$.
\end{corollary}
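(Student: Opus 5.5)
The plan is to reduce Corollary \ref{wD} to Theorem \ref{main}, so that the only real work is to show that the condition ``$\text{gr-wD}(R)\leq1$'' is equivalent to ``every graded submodule of a gr-flat left $R$-module is gr-flat.'' We treat the left case. The right case follows in the same way, using the right-handed version of Theorem \ref{main} noted after its proof and the equality $\text{gr-}\ell\text{wD}(R)=\text{gr-rwD}(R)$.

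First, assume every graded submodule of a gr-flat left module is gr-flat. Then every homogeneous left ideal $J$ is gr-flat, being a graded submodule of $R$. The sequence $0\to J\to R\to R/J\to 0$ is then a graded flat resolution of length at most one. Hence $\fd_R(R/J)=\text{gr-}\fd_R(R/J)\leq 1$ for every homogeneous left ideal $J$, and the second equality of Theorem \ref{grwD} gives $\text{gr-wD}(R)\leq1$.

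Conversely, assume $\text{gr-wD}(R)\leq 1$. Let $J$ be a homogeneous left ideal. Since $R/J$ is a $\Gamma$-graded left module, $\fd_R(R/J)\leq1$. Take a graded flat resolution of $R/J$ beginning with the graded surjection $R\to R/J$; its kernel is $J$. By \cite[Proposition 8.17]{ro09}, the first syzygy of a module of flat dimension at most one is flat, so $J$ is flat and hence gr-flat by \cite[Corollary 3.13]{ss25}. Corollary \ref{8.26} then upgrades ``every homogeneous left ideal is gr-flat'' to ``every graded submodule of a gr-flat left module is gr-flat.''

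Combining these two directions with Theorem \ref{main} proves the corollary. There is no real obstacle. The only point to check is that \cite[Proposition 8.17]{ro09} applies to a resolution whose first term is $R$ rather than an arbitrary flat module. It does, because the classical statement holds for any flat resolution, and $R$ is flat.
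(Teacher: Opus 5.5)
Your proposal is correct and matches the paper's intended argument. The paper gives no separate proof; the corollary follows by combining Theorem \ref{main} with the equivalence between $\text{gr-wD}(R)\leq1$ and the graded-submodule flatness condition, which the proof of Corollary \ref{8.26} together with Theorem \ref{grwD} supplies. Your detour through homogeneous left ideals in the converse direction is harmless, though applying \cite[Proposition 8.17]{ro09} directly to $M\to M/N$, as the paper does, saves a step.
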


Recall that a $\Gamma$-graded commutative integral domain $R$ is a \emph{graded-Pr\"{u}fer domain} if each nonzero finitely generated homogeneous ideal of $R$ is invertible, equivalently projective.

\begin{corollary}\label{}
Assume that $R=\bigoplus_{\alpha \in \Gamma}R_{\alpha}$ is a $\Gamma$-graded integral domain. Then $R$ is graded-Pr\"{u}fer if and only if $R$ is graded left coherent and $\text{gr-wD}(R)\leq1$.
\end{corollary}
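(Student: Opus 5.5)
The corollary is obtained by combining Corollary \ref{wD} with the fact that, for a commutative domain, graded left semihereditary is the same as graded-Pr\"{u}fer.

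First, since $R$ is commutative, left and right ideals coincide. Hence graded left semihereditary, graded right semihereditary and ``graded semihereditary'' are all the same condition, and likewise for graded coherence. By Corollary \ref{wD}, $R$ is graded left coherent with $\text{gr-wD}(R)\leq1$ if and only if every finitely generated homogeneous ideal of $R$ is projective.

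Second, I compare this with the definition of graded-Pr\"{u}fer, which asks that every \emph{nonzero} finitely generated homogeneous ideal be projective (equivalently invertible). The only ideal not covered is the zero ideal, which is trivially projective. So the condition ``every finitely generated homogeneous ideal is projective'' is exactly the graded-Pr\"{u}fer condition. Chaining the two equivalences proves the corollary.

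The only step needing care is the identification of ``invertible'' with ``projective'' for nonzero finitely generated homogeneous ideals of a graded domain. The statement takes this as part of the definition (``equivalently projective''). If I had to justify it, I would use the classical fact that in an integral domain a nonzero ideal is invertible if and only if it is projective. That fact does not involve the grading, so it applies to homogeneous ideals as a special case, and no further obstacle arises.
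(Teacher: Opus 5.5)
Your proposal is correct and matches the paper's implicit argument. The paper states this corollary without proof as an immediate consequence of Corollary \ref{wD}, using exactly your observation that for a commutative domain ``graded left semihereditary'' coincides with ``graded-Pr\"{u}fer'': the zero ideal is trivially projective, and projective is equivalent to invertible for nonzero ideals.
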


\begin{remark}\label{rem}{\em
Assume $R=\bigoplus_{\alpha \in \Gamma}R_{\alpha}$ is a $\Gamma$-graded ring. Then gr-$\text{wD}(R)\leq\text{wD}(R)$. We note that the equality can not be hold in general. Let $D$ be a Dedekind domain which is not a field and set $R:=D[X,X^{-1}]=D[X]_X$, for an indeterminate $X$ over $D$. Then by \cite[Example 3.6]{ac13}, $R$ is a $\mathbb{Z}$-graded-Pr\"{u}fer domain which is not Pr\"{u}fer. So that $\text{gr-wD}(R)\leq1$ by Corollary \ref{wD}. Note that $R$ is a coherent ring since it is Noetherian. Thus $\text{wD}(R)>1$ by \cite[Theorem 4.32]{ro09}. Then gr-$\text{wD}(R)\neq\text{wD}(R)$.
}
\end{remark}

Using Corollary \ref{wD} we can give an example of a graded left coherent ring but not a graded right coherent.

\begin{example}\label{}{\em
We can give a graduation to Chase's example, which provides us a graded left coherent ring but not a graded right coherent. Let $S$ be a von Neumann regular and nonsemisimple ring. Hence there is an ideal $I$ of $S$ such that, as a submodule of the right $S$-modules $S$, $I$ is not a direct summand. Let $R:=S/I$, which is also a von Neumann regular ring. The triangular ring
$$T:=\left(\begin{array}{cc}
                        R & R \\
                        0 & S
\end{array}\right)$$ is an $\mathbb{N}_0$-graded ring, with
$T_0=\left(\begin{array}{cc}
                        R & 0 \\
                        0 & S
\end{array}\right)$,
$T_1=\left(\begin{array}{cc}
                        0 & R \\
                        0 & 0
\end{array}\right)$
and $T_i=0$ for $i>1$. According to \cite[Example 5.2(3)]{ss25}, $T$ is graded left semihereditary but it is not a graded right semihereditary ring. Therefore $T$ is a graded left coherent ring but not a graded right coherent.
}
\end{example}



\begin{thebibliography}{10}

\bibitem{ss25} H. F. G. Al-Kharsan, P. Sahandi and N. Shirmohammadi, {\em On monoid graded semihereditary rings}, Commun. Alg. to appear, (arXiv:2603.20202v1, 24 Feb. 2026).


\bibitem{ac13} D. F. Anderson and G. W. Chang, {\em Graded integral domains and Nagata rings},
J. Algebra {\bf 387}, (2013), 169--184.




\bibitem{bh98} W. Bruns and J. Herzog, {\em Cohen-Macaulay Rings}, Revised Edition, Cambidge Studies in Advanced Mathematics {\bf 39}, 1998.



\bibitem{c60} S. U. Chase, {\em Direct products of modules}, Trans. Amer. Math. Soc. {\bf 97}, (1960), 457--473.


\bibitem{ff74} R. Fossum and H. B. Foxby, {\em The category of graded modules}, Math. Scand. {\bf 35}, No. 2 (1974), 288--300.





\bibitem{la98} T. Y. Lam, {\em Lectures on Modules and Rings}, Springer-Verlag, New York, 1998.

\bibitem{li12} H. Li, {\em On monoid graded local rings}, J. Pure Appl. Alg., {\bf 216}, (2012), 2697--2708.

\bibitem{no82} C. N\u{a}st\u{a}sescu and F. V. Oystaeyen, {\em Graded Ring Theory}, in: Math. Library, {\bf 28}, North Holland, Amsterdam, 1982.

\bibitem{no04} C. N\u{a}st\u{a}sescu and F. V. Oystaeyen, {\em Methods of Graded Rings}, Lecture Notes in Mathematics, {\bf 1836}, Springer-Verlag,  Berlin, 2004.



\bibitem{ro09} J. J. Rotman, {\em An Introduction to Homological Algebra}, Springer-Verlag, New York, 2009.



\end{thebibliography}
\end{document}